\newtheorem{thm}{Theorem}[section]
\newtheorem{lem}[thm]{Lemma}
\newtheorem{prop}[thm]{Proposition}
\theoremstyle{definition}
\newtheorem{defn}[thm]{Definition}
\newtheorem{exmpl}[thm]{Example}
\theoremstyle{remark}
\newtheorem{rem}[thm]{Remark}
\numberwithin{equation}{section}
\begin{document}

\title{Invariant for Sets of Pfister Forms}


\author{Adam Chapman}
\address{School of Computer Science, Academic College of Tel-Aviv-Yaffo, Rabenu Yeruham St., P.O.B 8401 Yaffo, 6818211, Israel}
\email{adam1chapman@yahoo.com}

\author{Ilan Levin}
\address{Department of Mathematics, Bar-Ilan University, Ramat Gan, 55200}
\email{ilan7362@gmail.com}

\subjclass[2020]{Primary 19D45;
Secondary 11E04, 11E81, 13A35, 16K20 20G10}
\keywords{Algebraic $K$-Theory, Milnor $K$-Theory, Symmetric Bilinear Forms, Quadratic Forms, Symbol Length, Quaternion Algebras}


\begin{abstract}
We associate an $(n_1+\dots+n_t-k(t-1))$-fold Pfister form to any $t$-tuple of $k$-linked Pfister forms of dimensions $2^{n_1},\dots,2^{n_t}$, and prove its invariance under the different symbol presentations of the forms with a common $k$-fold sub-symbol. We then show that it vanishes when the forms are actually $(k+1)$-linked or when the characteristic is 2 and the forms are inseparably $k$-linked. We study whether the converse statements hold or not.
\end{abstract}

\maketitle


\section{Introduction}

Let $F$ be a field. When $char(F) \neq 2$, an $n$-fold form is a quadratic form of the shape
$$\langle \! \langle \alpha_1,\dots,\alpha_n \rangle \! \rangle = {\mathlarger\perp}_{c_1,\dots,c_n \in \{0,1\}} \langle (-1)^{c_1+\dots+c_n} \alpha_1^{c_1} \cdot \ldots \cdot \alpha_n^{c_n}\rangle,$$
and when $\operatorname{char}(F)=2$, of the shape
$$\langle \! \langle \alpha_1,\dots,\alpha_{n-1},\alpha_n ] \! ] = {\mathlarger\perp}_{c_1,\dots,c_{n-1} \in \{0,1\}} \alpha_1^{c_1} \cdot \ldots \cdot \alpha_{n-1}^{c_{n-1}}\cdot [1,\alpha_n],$$
$$\text{where} \ [1,\alpha]=x^2+xy+\alpha y^2.$$

Pfister forms correspond to the generators of the Galois cohomology groups $H^n(F,\mu_2)$ in characteristic not 2 or the Kato-Milne cohomology groups $H_2^n(F)$ in characteristic 2 (see \cite{EKM}, \cite{Voevodsky} and \cite{Kato:1982}).

When $\operatorname{char}(F)\neq 2$, there is a well-defined map associating to any sum of $m$ symbols in $H^n(F,\mu_2)$, a single symbol in $H^{mn}(F,\mu_2)$ (see \cite[Appendix A]{Kahn:2000}), which is literally the tensor product of the symbols in the sum (as bilinear forms). There is no characteristic 2 analogue to this. The invariant we propose here can serve as a weak characteristic-free analogue, which associates an invariant to sets of Pfister forms (instead of sums of symbols in the cohomology groups), under the condition they share a certain common quadratic Pfister factor.

Pfister forms $\psi_1 , \dots, \psi_m$ are called \textbf{inseparably} $k$-linked if there exists a $k$-fold \textbf{bilinear} Pfister form $B$ which is a Pfister factor of all the forms $\psi_1,\dots,\psi_m$, i.e., there exist quadratic Pfister forms $\pi_1,\dots, \pi_m$ such that $\psi_1 = B \otimes \pi_1,\dots,\psi_m = B \otimes \pi_m$.
Similarly, Pfister forms $\psi_1,\dots,\psi_m$ are called \textbf{separably} $k$-linked if there exists a $k$-fold \textbf{quadratic} Pfister form $\pi$ which is a Pfister factor of all the forms $\psi_1,\dots,\psi_m$, i.e., there exist bilinear Pfister forms $B_1,\dots, B_m$ such that $\psi_1 = B_1 \otimes \pi,\dots,\psi_m = B_m \otimes \pi$. \\

When the forms are over a field of characteristic different from 2, the two notions coincide. The case of characteristic 2 is more subtle. By \cite[Theorem 4.7]{ChapmanFlorenceMcKinnie:2022}, any finite system of inseparably linked quaternion algebras are also separably linked, and thus every finite number of quadratic Pfister forms that are inseparably $k$-linked are also separably $k$-linked. The converse is false (see \cite{Lam:2002} for example).

The study of linkage proved to be useful in the study of the $u$-invariant and other related arithmetic invariants: when every two 2-fold quadratic Pfister forms are 1-linked, the $u$-invariant is either 0,1,2,4 or 8 (\cite{ElmanLam:1973} and \cite{ChapmanDolphin:2017}), and having $u(F)\leq 4$ is equivalent to 1-linkage of every 3 quadratic 2-fold Pfister forms when $\operatorname{char}(F)\neq 2$ (\cite{ChapmanDolphinLeep}). When $\operatorname{char}(F)=2$, $u(F)\leq 4$ if and only if every 2 quadratic 2-fold Pfister forms are inseparably 1-linked (\cite{Baeza:1982}). For the connection between linkage of pairs of quadratic $n$-fold Pfister forms for $n \geq 3$ and other similar arithmetic invariants, see \cite{ChapmanMcKinnie:2019}.

The term ``$k$-linkage" without specifying whether it is separable or inseparable can be by default considered separable.

In this paper we associate an $(n_1+\dots+n_t-k(t-1))$-fold Pfister form to any $t$-tuple of $k$-linked Pfister forms of dimensions $2^{n_1},\dots,2^{n_t}$, and prove its invariance under the different symbol presentations of the forms with a common $k$-fold sub-symbol. As applications to the study of linkage, we show that it vanishes when the forms are actually $(k+1)$-linked or when the characteristic is 2 and the forms are inseparably $k$-linked. We study whether the converse statements hold or not.

\section{Conditions for $k$-linkage}

\begin{prop}\label{inslinkedcond}
Let $F$ be a field of characteristic 2. Given separably $k$-linked Pfister forms $\varphi$ and $\psi$, they are also inseparably $k$-linked $\iff$ $\omega$ is isotropic, where $ \omega = \theta_{\varphi} \perp \theta_{\psi} \perp \langle 1 \rangle$, with $\theta_{\varphi}$ and $\theta_{\psi}$ such that $\varphi = \pi \perp \theta_{\varphi}$ and $\psi = \pi \perp \theta_{\psi}$, where $\pi$ is a $k$-fold quadratic Pfister form.
\end{prop}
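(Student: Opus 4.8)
The plan is to first strip away the quasilinear part of $\omega$, reduce the whole statement to a single representation condition, and then read inseparable $k$-linkage off that condition by producing a common bilinear slot. First I would record the shape of $\theta_\varphi$ and $\theta_\psi$: using separable $k$-linkage with common quadratic $k$-fold factor $\pi$, we have $\varphi \cong B_\varphi \otimes \pi$ and $\psi \cong B_\psi \otimes \pi$ for bilinear Pfister forms $B_\varphi, B_\psi$, and splitting off the leading $\langle 1 \rangle$ of each bilinear factor gives $\theta_\varphi \cong B_\varphi' \otimes \pi$ and $\theta_\psi \cong B_\psi' \otimes \pi$, which are nonsingular. Hence $\omega$ has nonsingular part $\theta_\varphi \perp \theta_\psi$ and quasilinear part $\langle 1 \rangle$. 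Since a nonsingular isotropic quadratic form in characteristic $2$ splits off a hyperbolic plane and is therefore universal, an isotropic vector of $\omega$ with vanishing $\langle 1 \rangle$-coordinate already forces $\theta_\varphi \perp \theta_\psi$ to be isotropic, hence to represent $1$, while a vector with nonzero $\langle 1 \rangle$-coordinate forces $\theta_\varphi \perp \theta_\psi$ to represent $1$ directly. Thus the first step is the clean equivalence
\[
\omega \text{ is isotropic} \iff \theta_\varphi \perp \theta_\psi \text{ represents } 1 .
\]
I would stress that representability of $1$ here is a genuine condition on the honest form $\theta_\varphi \perp \theta_\psi$, not on its Witt class: adjoining the hyperbolic plane $\pi \perp \pi$ that distinguishes it from $\varphi \perp \psi$ would make representability of $1$ automatic, and it is exactly this distinction that accounts for the extra summand $\langle 1 \rangle$.

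For the implication ``inseparably $k$-linked $\Rightarrow \omega$ isotropic'', I would start from a common bilinear $k$-fold factor $B$, say $\varphi \cong B \otimes \rho_\varphi$ and $\psi \cong B \otimes \rho_\psi$. Writing $B \cong \langle 1, d \rangle \otimes B_0$ exhibits a distinguished bilinear slot $d$, and I would compare the two presentations $B_\varphi \otimes \pi \cong \varphi \cong B \otimes \rho_\varphi$ (and similarly for $\psi$) to produce a value of $\theta_\varphi \perp \theta_\psi$ equal to $1$. Concretely, roundness of the quadratic Pfister forms involved lets me convert the shared datum $d$ into a common element of the similarity groups of $\varphi$ and $\psi$, and the hyperbolicity of $\pi \perp \pi$ in $W_q(F)$ then transports this into an actual representation of $1$ by $\theta_\varphi \perp \theta_\psi$.

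For the converse I would argue that a representation $\theta_\varphi(u) + \theta_\psi(w) = 1$ manufactures a common bilinear slot. Writing $\pi \cong B_\pi \otimes [1, \alpha]$ with $B_\pi$ a bilinear $(k-1)$-fold Pfister form, both $\varphi$ and $\psi$ already admit $B_\pi$ as a common bilinear factor, so the task is to enlarge $B_\pi$ by one bilinear slot. I would peel off $B_\pi$ (using that a bilinear Pfister form divides a quadratic Pfister form exactly when it annihilates it, i.e.\ $B_\pi \otimes \varphi$ is hyperbolic) to reduce to the essential case $k = 1$, where $\varphi$ and $\psi$ share only the quadratic $1$-fold factor $[1,\alpha]$ and one must produce a single common bilinear slot $\langle 1, d \rangle$. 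In that reduced situation the value realising the representation of $1$ yields, via roundness, an element $d$ lying in the represented values of both $\theta_\varphi$ and $\theta_\psi$, and I would verify that $\langle 1, d \rangle$ divides both forms, giving the common bilinear $k$-fold factor $B = B_\pi \otimes \langle 1, d \rangle$.

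The main obstacle is the interplay between the two \emph{different} common factors: the condition is phrased through the separable factor $\pi$ (hence through $\theta_\varphi, \theta_\psi$), whereas $k$-linkage is about an inseparable factor $B$, and these need not share slots. Making the reduction ``divide by $B_\pi$'' rigorous — controlling divisibility by a bilinear Pfister form and ensuring that representability of $1$ is preserved under it — together with the characteristic-$2$ bookkeeping that separates the shared Artin--Schreier slot $\alpha$ from the newly produced inseparable slot $d$, is where I expect the real work to lie.
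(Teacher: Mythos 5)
Your opening reduction is correct: since $\theta_{\varphi}\perp\theta_{\psi}$ is nonsingular and $\langle 1\rangle$ is the quasilinear part, $\omega$ is isotropic if and only if $\theta_{\varphi}\perp\theta_{\psi}$ represents $1$. But at that point you have only rephrased the statement; the equivalence between ``$\theta_{\varphi}\perp\theta_{\psi}$ represents $1$'' and inseparable $k$-linkage \emph{is} the proposition, and it is exactly the content of the theorem of Faivre (Th\'eor\`eme 2.5.5 of his thesis) that the paper's proof reduces to, via the identity $\varphi'\perp\psi'=(2^k-2)\times\mathbb{H}\perp\omega$ for the pure parts. Your proposal is therefore an attempt to reprove Faivre's theorem, and neither implication is actually carried out. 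For the forward direction, the mechanism you describe (turning the slot $d$ of $B$ into a common similarity factor) proves nothing, since $D(B)\subseteq G(\varphi)\cap G(\psi)$ holds for many elements regardless of linkage --- $1$ is always a common similarity factor. A correct route does exist here: from $\varphi\perp\psi\cong B\otimes(\rho_{\varphi}\perp\rho_{\psi})$, write $\rho_{\varphi}\perp\rho_{\psi}\cong\mathbb{H}\perp[1,a+b]\perp\cdots$ and use Witt cancellation against $\varphi\perp\psi\cong 2^k\times\mathbb{H}\perp\theta_{\varphi}\perp\theta_{\psi}$ to see that $\theta_{\varphi}\perp\theta_{\psi}$ contains the Pfister form $B\otimes[1,a+b]$ and hence represents $1$ --- but that is not the argument you sketch.

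The converse is where the proposal genuinely breaks down. First, the fact you want to invoke for the reduction --- that a bilinear Pfister form divides a quadratic Pfister form exactly when it annihilates it --- is false as stated: $\langle\!\langle b\rangle\!\rangle\otimes[1,a]$ is hyperbolic whenever $b$ is a norm from $F[\wp^{-1}(a)]$, yet $\langle\!\langle b\rangle\!\rangle$ never divides the two-dimensional nonsingular form $[1,a]$. Second, even granting a reduction to $k=1$, the step ``the value realising the representation of $1$ yields, via roundness, an element $d$ lying in the represented values of both $\theta_{\varphi}$ and $\theta_{\psi}$, and $\langle 1,d\rangle$ divides both forms'' is precisely the hard common-inseparable-slot theorem in characteristic $2$; a representation $\theta_{\varphi}(u)+\theta_{\psi}(w)=1$ does not hand you a single $d\in D(\theta_{\varphi})\cap D(\theta_{\psi})$, let alone one for which $\langle\!\langle d\rangle\!\rangle$ is a common bilinear factor. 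You acknowledge that this is ``where the real work lies,'' but that work is the proof. Either supply it, or do as the paper does and quote Faivre's criterion, which turns the proposition into the short bookkeeping computation with $\varphi'\perp\psi'$.
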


\begin{proof}
Write $\varphi = \langle \! \langle \beta_{1}, \dots, \beta_{n}, \alpha_1, \dots, \alpha_{k}]\!]$ and $\psi = \langle \! \langle \gamma_{1}, \dots, \gamma_{m}, \alpha_1, \dots, \alpha_{k}]\!]$. Denote $\pi = \langle \! \langle \alpha_1 , \dots , \alpha_{k}]\!]$. By definition of a pure Pfister subform, $$\varphi' = \langle \! \langle \beta_{1}, \dots, \beta_{n}, \alpha_1, \dots, \alpha_{k-1} \rangle \! \rangle ' \otimes [1,\alpha_{k}] \perp \langle 1 \rangle = \langle \! \langle \alpha_1, \dots, \alpha_{k-1} \rangle \! \rangle' \otimes [1,\alpha_{k}] \perp \theta_{\varphi} \perp \langle 1 \rangle$$ and $$\psi' = \langle \! \langle \gamma_{1}, \dots, \gamma_{m}, \alpha_1, \dots, \alpha_{k-1} \rangle \! \rangle ' \otimes [1,\alpha_{k}] \perp \langle 1 \rangle = \langle \! \langle \alpha_1, \dots, \alpha_{k-1} \rangle \! \rangle' \otimes [1,\alpha_{k}] \perp \theta_{\psi} \perp \langle 1 \rangle.$$
We know that $\langle 1,1 \rangle \cong \langle 1,0 \rangle$ (since $x^2+y^2 = (x+y)^2$), hence $\varphi' \perp \psi' = (2^{k}-2) \times \mathbb{H} \perp \theta_{\varphi} \perp \theta_{\psi} \perp \langle 1 \rangle = (2^{k}-2) \times \mathbb{H} \perp \omega$. Now, $\omega$ is isotropic $\iff$ $i_{W}(\varphi' \perp \psi') \geq 2^{k}-1$, and by \cite[Theorem 2.5.5]{Faivre:thesis}, $i_{W}(\varphi' \perp \psi') \geq 2^{k}-1$ $\iff$ $\varphi$ and $\psi$ are inseparably $k$-linked. This completes the proof.
\end{proof}
\begin{prop}\label{seplinkedcond}
Let $F$ be a field of arbitrary characteristic. Given $k$-linked Pfister forms $\varphi$ and $\psi$, they are ($k+1$)-linked $\iff$ $\omega$ is isotropic, where $\varphi \perp -\psi = 2^{k} \times \mathbb{H} \perp \omega$
\end{prop}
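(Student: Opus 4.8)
The plan is to mirror the proof of Proposition \ref{inslinkedcond}: first reduce the linkage condition to a statement about the Witt index of $\varphi \perp -\psi$, and then exploit the fact that this Witt index can only jump in powers of $2$. Fix a common $k$-fold quadratic Pfister factor $\pi$ supplied by $k$-linkage, and write $\varphi = \pi \perp \theta_{\varphi}$ and $\psi = \pi \perp \theta_{\psi}$; this is legitimate because $\varphi = B_{\varphi} \otimes \pi$ with $B_{\varphi} = \langle 1 \rangle \perp B_{\varphi}'$, so that $\theta_{\varphi} = B_{\varphi}' \otimes \pi$, and similarly for $\psi$. Then
$$\varphi \perp -\psi = (\pi \perp -\pi) \perp (\theta_{\varphi} \perp -\theta_{\psi}).$$
Since $\pi \perp -\pi$ is hyperbolic of dimension $2^{k+1}$ in every characteristic, this already exhibits $\varphi \perp -\psi = 2^{k} \times \mathbb{H} \perp \omega$ with $\omega = \theta_{\varphi} \perp -\theta_{\psi}$, so the splitting off of exactly $2^{k}$ hyperbolic planes demanded in the statement is valid and $\omega$ is well defined up to isometry. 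In particular $i_{W}(\varphi \perp -\psi) \geq 2^{k}$, and $\omega$ is isotropic if and only if $i_{W}(\varphi \perp -\psi) \geq 2^{k}+1$.

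The easy implication is $(k+1)$-linked $\Rightarrow$ $\omega$ isotropic: if $\varphi$ and $\psi$ share a common $(k+1)$-fold quadratic Pfister factor $\pi'$, then repeating the computation above with $\pi$ replaced by $\pi'$ gives $i_{W}(\varphi \perp -\psi) \geq 2^{k+1} > 2^{k}$, which forces $\omega$ to be isotropic. This direction is characteristic-free and requires nothing beyond the hyperbolicity of $\pi' \perp -\pi'$.

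For the converse I would invoke the structural description of separable linkage through the Witt index (the separable counterpart of \cite[Theorem 2.5.5]{Faivre:thesis}, ultimately going back to Elman--Lam \cite{ElmanLam:1973}): for a pair of quadratic Pfister forms, $\varphi$ and $\psi$ are separably $j$-linked if and only if $i_{W}(\varphi \perp -\psi) \geq 2^{j}$, and moreover this Witt index is either zero or a power of $2$. Granting this, suppose $\omega$ is isotropic, so $i_{W}(\varphi \perp -\psi) \geq 2^{k}+1 > 2^{k}$; since the Witt index is a power of $2$, this compels $i_{W}(\varphi \perp -\psi) \geq 2^{k+1}$, whence $\varphi$ and $\psi$ are $(k+1)$-linked, completing the equivalence.

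The main obstacle is exactly this jump. Isotropy of $\omega$ only yields $i_{W}(\varphi \perp -\psi) \geq 2^{k}+1$, and for $k \geq 1$ the value $2^{k}+1$ is strictly smaller than $2^{k+1}$; upgrading $i_{W} \geq 2^{k}+1$ to $i_{W} \geq 2^{k+1}$, that is, ruling out every intermediate Witt index, is precisely the power-of-$2$ property of common Pfister factors and is the crux of the argument. In carrying this out I would need to confirm that the cited Witt-index characterization is available in arbitrary characteristic, and to treat the degenerate case in which one of $\varphi, \psi$ is isotropic, hence hyperbolic, separately, since there the common factor may be chosen freely and the equivalence should be verified directly from the Witt-index inequalities.
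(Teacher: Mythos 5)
Your proof is correct, and your forward direction coincides with the paper's; the converse, however, goes by a genuinely different route. The paper argues directly: isotropy of $\omega=\theta_{\varphi}\perp-\theta_{\psi}$ yields an element $a\in F^{\times}$ represented by both $\theta_{\varphi}$ and $-\theta_{\psi}$, and \cite[Lemma 23.1]{EKM} then upgrades this common represented value to a common $(k+1)$-fold factor $\langle\!\langle -a\rangle\!\rangle\otimes\pi$ of $\varphi$ and $\psi$. You instead invoke the Elman--Lam linkage theorem and its characteristic-$2$ analogue for quadratic Pfister forms (available in \cite{EKM} and in \cite{Faivre:thesis}), namely that $i_{W}(\varphi\perp-\psi)$ equals $2^{d}$ with $d$ the maximal separable linkage, and read off the jump from $2^{k}+1$ to $2^{k+1}$. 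That citation is legitimate and closes the argument, but be aware of what it costs: the ``power-of-$2$ jump'' you correctly identify as the crux is not an auxiliary fact independent of the proposition --- it essentially \emph{is} the proposition --- so your proof reduces the statement to a strictly stronger cited theorem, whereas the paper derives it from the more elementary representation criterion of \cite[Lemma 23.1]{EKM}. If you want a self-contained argument, replace the appeal to the linkage theorem by that common-value step. Two smaller points in your favour: you correctly note that ``$\omega$ isotropic $\iff i_{W}(\varphi\perp-\psi)\geq 2^{k}+1$'' rests on Witt cancellation for nondegenerate quadratic forms (valid here in all characteristics), and you rightly flag the degenerate case of an isotropic, hence hyperbolic, $\varphi$ or $\psi$, which the paper's phrase ``represent a common element $a\in F^{\times}$'' silently absorbs.
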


\begin{proof}
First, write $\varphi = \pi \perp \theta_{\varphi}$ and $\psi = \pi \perp \theta_{\psi}$. Then, $\varphi \perp -\psi = \pi \perp -\pi \perp \theta_{\varphi} \perp -\theta_{\psi}$. So, $\varphi \perp -\psi = 2^{k} \times \mathbb{H} \perp \omega$, where $\omega = \theta_{\varphi} \perp -\theta_{\psi}$. 

($\Longrightarrow$): If $\varphi$ and $\psi$ are ($k+1$)-linked, then $i_W(\varphi \perp -\psi)\geq 2^{k+1}$, and therefore $i_W(\omega) \geq 2^k$, and in particular, $\omega$ is isotropic.

($\Longleftarrow$): Suppose $\omega = \theta_{\varphi} \perp -\theta_{\psi}$ is isotropic. Then $\theta_{\varphi}$ and $-\theta_{\psi}$ represent a common element $a \in F^{\times}$. Let us claim that $\ \langle \! \langle -a \rangle \! \rangle \otimes \pi$ is a Pfister factor of both $\varphi$ and $\psi$. By \cite[Lemma 23.1]{EKM}, $\ \langle \! \langle -a \rangle \! \rangle \otimes \pi$ is a subform of $\varphi$ and $\psi$, and so the forms are $(k+1)$-linked.
\end{proof}

\section{The Invariant}


Fix a field $F$ in the background, and let $PF$ denote the set of quadratic Pfister forms over that field.

\begin{lem}\label{chainstep}
Given integers $n > k \geq 1$, a quadratic $n$-fold Pfister form $\psi$ and two quadratic $k$-fold factors $\varphi_1$ and $\varphi_2$, there exists a bilinear Pfister form $\rho$ for which $\psi=\varphi_1 \otimes \rho$ and $\psi=\varphi_2 \otimes \rho$.
\end{lem}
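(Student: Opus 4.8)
The plan is to reduce to the case where $\psi$ is anisotropic and then to build the bilinear complement $\rho$ one slot at a time, by induction on $d=n-k$. If $\psi$ is isotropic it is hyperbolic, and then for \emph{any} isotropic (hence metabolic) $(n-k)$-fold bilinear Pfister form $\rho$ (for instance $\langle \! \langle 1 \rangle \! \rangle \otimes \langle \! \langle 1, \dots, 1 \rangle \! \rangle$) both products $\varphi_1 \otimes \rho$ and $\varphi_2 \otimes \rho$ are hyperbolic and therefore equal to $\psi$; so we may assume $\psi$ anisotropic. The induction bottoms out at $d=0$, where a $k=n$-fold factor is forced to equal $\psi$, so $\varphi_1=\psi=\varphi_2$ and $\rho=\langle 1 \rangle$ works. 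The heart of the argument is the inductive step, where I produce a single bilinear $1$-fold slot that simultaneously enlarges both factors inside $\psi$.

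For the step, write $\psi = \varphi_1 \perp \theta_1 = \varphi_2 \perp \theta_2$, where $\theta_i$ is the complementary form of the $k$-fold factor $\varphi_i$. First I claim that $\theta_1$ and $\theta_2$ represent a common value $c \in F^\times$. In the Witt group one has $\theta_i \equiv \psi \perp (-\varphi_i)$, so $\theta_1 \perp (-\theta_2) \equiv \varphi_2 \perp (-\varphi_1)$; the latter is isotropic because each $\varphi_i$ represents $1$. Since $\dim(\theta_1 \perp (-\theta_2)) = 2(2^n - 2^k) = 2^{n+1}-2^{k+1}$ strictly exceeds $2^{k+1}-2 \geq \dim_{\mathrm{an}}(\varphi_2 \perp (-\varphi_1))$, the form $\theta_1 \perp (-\theta_2)$ is itself isotropic; as $\theta_1,\theta_2$ are subforms of the anisotropic $\psi$ and hence anisotropic, this isotropy yields a common nonzero value $c$. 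In characteristic $2$ the same computation applies once one uses $-[\sigma]=[\sigma]$ and $2[\psi]=0$ in $W_q(F)$, so that $\theta_1 \perp \theta_2 \equiv \varphi_1 \perp \varphi_2$, which is isotropic since $\varphi_1(v)+\varphi_2(w)=1+1=0$ for suitable $v,w$.

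Given such a common value $c \in D(\theta_1) \cap D(\theta_2)$, I would invoke \cite[Lemma 23.1]{EKM} exactly as in the proof of Proposition~\ref{seplinkedcond}: representation of $c$ by the complement $\theta_i$ forces a suitable $1$-fold bilinear Pfister form times $\varphi_i$ to be a subform of $\psi$, and being a Pfister subform of the Pfister form $\psi$ it is in fact a factor. Thus, for the \emph{same} $c$, both $\varphi_1 \otimes \langle \! \langle c' \rangle \! \rangle$ and $\varphi_2 \otimes \langle \! \langle c' \rangle \! \rangle$ (with $c'$ the scalar determined by $c$ via \cite[Lemma 23.1]{EKM}) are $(k+1)$-fold quadratic Pfister factors of $\psi$, now of fold difference $d-1$. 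The induction hypothesis then supplies a single bilinear $(d-1)$-fold Pfister form $\rho''$ with $(\varphi_i \otimes \langle \! \langle c' \rangle \! \rangle) \otimes \rho'' = \psi$ for both $i$, and $\rho = \langle \! \langle c' \rangle \! \rangle \otimes \rho''$ is the desired common complement. Equivalently, iterating the step $d$ times attaches the same slots $c_1,\dots,c_d$ to both factors until $\varphi_i \otimes \langle \! \langle c_1,\dots,c_d \rangle \! \rangle$ is an $n$-fold Pfister form dividing $\psi$, hence equal to $\psi$.

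The main obstacle is precisely the production of a \emph{single} scalar $c$ that enlarges both factors at once: it is easy to enlarge $\varphi_1$ or $\varphi_2$ separately, but one must force the two complementary forms $\theta_1,\theta_2$ to share a represented value. This is where roundness of Pfister forms enters through the fact that each $\varphi_i$ represents $1$, combined with the dimension count that makes $\theta_1 \perp (-\theta_2)$ isotropic despite its large rank. A secondary point requiring care is the characteristic $2$ bookkeeping, where the asymmetry between bilinear and quadratic factors and the identities $-[\sigma]=[\sigma]$, $2[\psi]=0$ must be used to keep the Witt-class computation valid.
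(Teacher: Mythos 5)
Your proof is correct, and while the global strategy coincides with the paper's (induct on $d=n-k$, attaching one common bilinear slot to both factors at each step until they fill up $\psi$), the mechanism you use to produce the common slot is genuinely different. The paper argues inside the underlying vector space of $\psi$: it takes the orthogonal complements $V_1^\perp,V_2^\perp$ of the subspaces carrying $\varphi_1,\varphi_2$, observes that both lie in the hyperplane orthogonal to a common vector representing $1$, deduces $V_1^\perp\cap V_2^\perp\neq 0$ from $\dim V_1^\perp+\dim V_2^\perp>2^n-1$, and sets $\gamma=-\psi(v)$ for a nonzero $v$ in the intersection. You instead compute $\theta_1\perp(-\theta_2)\equiv\varphi_2\perp(-\varphi_1)$ in the Witt group and compare dimensions with the anisotropic part to force a common represented value of the complements --- which is precisely the technique of Proposition~\ref{seplinkedcond}, so in effect you prove the lemma by iterating that proposition's argument one fold at a time. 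Both routes are sound; yours has the small advantages of explicitly disposing of the isotropic case (where the paper's $\gamma=-\psi(v)$ could degenerate) and of keeping the characteristic-$2$ sign conventions visible, while the paper's avoids Witt-group bookkeeping entirely. One step you should make explicit with a citation rather than assert: the passage from ``$\varphi_i\otimes\langle\!\langle c'\rangle\!\rangle$ is a Pfister subform of $\psi$'' to ``it is a Pfister factor of $\psi$,'' which you need in order to re-enter the induction, is the standard subform-divisibility property of anisotropic quadratic Pfister forms (valid in all characteristics and implicitly used by the paper as well).
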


\begin{proof}
It is enough to show that there exists $\gamma \in F^\times$ for which both $\varphi_1 \otimes \langle \! \langle \gamma \rangle \! \rangle$ and $\varphi_2 \otimes \langle \! \langle \gamma \rangle \! \rangle$ are subforms of $\psi$. Then the process can be repeated for $\varphi_1 \otimes \langle \! \langle \gamma \rangle \! \rangle$ and $\varphi_2 \otimes \langle \! \langle \gamma \rangle \! \rangle$, and so on, as long as the two forms we obtain are of smaller dimension than $\dim \psi$. The process terminates when the dimension of $\varphi_1 \otimes \langle \! \langle \gamma \rangle \! \rangle$ and $\varphi_2 \otimes \langle \! \langle \gamma \rangle \! \rangle$ is the same as $\dim \psi$, and then all three forms are isometeric to each other.

Recall that $\psi$ is endowed with an underlying symmetric bilinear form $B_\psi(v,w)=\psi(v+w)-\psi(v)-\psi(w)$ for any $v,w \in F^{2^n}$, which gives rise to the notation of orthogonality. This symmetric bilinear form is non-degenerate in the sense that there is no vector in $F^{2^n}$ that is perpendicular to all the vectors in $F^{2^n}$.

Now, consider the subspaces $V_1$ and $V_2$ of $F^{2^n}$ corresponding to the subforms $\varphi_1$ and $\varphi_2$ of $\psi$.
Note that $V_1$ and $V_2$ share at least the vector $(1,0,\dots,0)$ whose image under $\psi$ is $1$.
Write $V_1^\perp$ and $V_2^\perp$ for the orthogonal complements of $V_1$ and $V_2$ in $F^{2^n}$.
Their dimension is $2^n-2^k$.
Since they are both subspaces of $W=\operatorname{Span}_F\{(1,0,\dots,0)\}^\perp$ and $\dim V_1^\perp+\dim V_2^\perp>\dim W$, the intersection $V_1^\perp \cap V_2^\perp$ is nontrivial.
Take $v \in V_1^\perp \cap V_2^\perp \setminus \{\vec{0}\}$ and set $\gamma=-\psi(v)$. Then, by \cite[Proposition 6.15 and 15.7]{EKM}, both $\varphi_1 \otimes \langle \! \langle \gamma \rangle \! \rangle$ and $\varphi_2 \otimes \langle \! \langle \gamma \rangle \! \rangle$ are subforms of $\psi$.
\end{proof}

\begin{defn}
Define $PF^{2}(k) = \{ (\psi_1,\psi_2) \in PF \times PF : \psi_1 \text{and } \psi_2 \text{ are $k$-linked} \}$.
\end{defn}

\begin{defn}[{The Invariant}] Define $\mathcal{F}^{2}_{k} : PF^{2}(k) \rightarrow PF$ by: $$(\psi_1 , \psi_2) \mapsto \varphi_{1} \otimes \varphi_{2} \otimes \pi$$
where $\pi$ is a $k$-fold Pfister form for which $\psi_1=\varphi_1 \otimes \pi$ and $\psi_2=\varphi_2 \otimes \pi$ for some bilinear Pfister forms $\varphi_1$ and $\varphi_2$.
\end{defn}

\begin{thm}
$\mathcal{F}^{2}_{k}$ is well-defined.
\end{thm}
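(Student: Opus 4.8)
The plan is to show that any two admissible presentations of a $k$-linked pair $(\psi_1,\psi_2)$ produce isometric output forms. Suppose we are given common $k$-fold quadratic Pfister factors $\pi$ and $\pi'$ together with bilinear Pfister forms realizing $\psi_1=\varphi_1\otimes\pi=\varphi_1'\otimes\pi'$ and $\psi_2=\varphi_2\otimes\pi=\varphi_2'\otimes\pi'$; the goal is the isometry $\varphi_1\otimes\varphi_2\otimes\pi\cong\varphi_1'\otimes\varphi_2'\otimes\pi'$. Both sides are quadratic Pfister forms of the same dimension $2^{n_1+n_2-k}$, where $2^{n_i}=\dim\psi_i$. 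A quadratic Pfister form is either anisotropic or hyperbolic, so two such forms of equal dimension are isometric as soon as they share a Witt class. Hence the first move is to replace the target isometry by the weaker-looking assertion that the two outputs have the same class in the Witt group $W_q(F)$ of nonsingular quadratic forms, regarded as a module over the (commutative) Witt ring $W(F)$ of symmetric bilinear forms. In characteristic different from $2$ everything already lives in $W(F)$ and the argument below simplifies accordingly.

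The algebraic engine is the commutativity of $W(F)$ acting on $W_q(F)$: if $B\otimes\pi=B'\otimes\pi$ in $W_q(F)$ for bilinear Pfister forms $B,B'$, then $(B-B')\cdot\pi=0$, and multiplying by any bilinear Pfister form $C$ and using $C(B-B')=(B-B')C$ gives $B\otimes C\otimes\pi=B'\otimes C\otimes\pi$ in $W_q(F)$. This single observation, applied in several configurations, absorbs both the dependence on the complementary bilinear factors (for a fixed common factor) and the dependence on the common factor itself.

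To align the two presentations I would invoke the chain lemma (Lemma \ref{chainstep}). Applied to $\psi_1$ with its two $k$-fold quadratic factors $\pi$ and $\pi'$, it yields a single bilinear Pfister form $\rho_1$ with $\psi_1=\pi\otimes\rho_1=\pi'\otimes\rho_1$; applied to $\psi_2$ it yields $\rho_2$ with $\psi_2=\pi\otimes\rho_2=\pi'\otimes\rho_2$. (If $n_i=k$ the only $k$-fold factor of $\psi_i$ is $\psi_i$ itself, so $\pi=\pi'=\psi_i$ and one takes the trivial $\rho_i=\langle 1\rangle$.) Now the engine gives three Witt-class equalities: from $\varphi_1\otimes\pi=\rho_1\otimes\pi\,(=\psi_1)$, multiplying by $\varphi_2$ and then substituting $\varphi_2\otimes\pi=\rho_2\otimes\pi$, one obtains $\varphi_1\otimes\varphi_2\otimes\pi=\rho_1\otimes\rho_2\otimes\pi$; symmetrically $\varphi_1'\otimes\varphi_2'\otimes\pi'=\rho_1\otimes\rho_2\otimes\pi'$; and from $\rho_1\otimes\pi=\rho_1\otimes\pi'$, multiplying by $\rho_2$, one gets $\rho_1\otimes\rho_2\otimes\pi=\rho_1\otimes\rho_2\otimes\pi'$. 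Chaining these three and passing back from Witt classes to isometries (via the Pfister dichotomy and the dimension count above) yields the desired isometry.

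The conceptual heart of the argument is the use of the chain lemma: it is precisely what lets me represent $\psi_1=\pi\otimes\rho_1=\pi'\otimes\rho_1$ with one and the same bilinear complement $\rho_1$, so that the classes of $\pi$ and $\pi'$ may legitimately be subtracted after multiplying by $\rho_2$; without a common complement the Witt-class cancellation would not be available. Everything else is formal manipulation in the $W(F)$-module $W_q(F)$. I therefore expect the only genuinely delicate points to be bookkeeping ones: verifying that the module identities are applied correctly in characteristic $2$, where $\pi$ lives in $W_q(F)$ rather than in $W(F)$, and confirming that the anisotropic-or-hyperbolic dichotomy for Pfister forms suffices to upgrade the three Witt-class equalities to isometries.
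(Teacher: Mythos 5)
Your argument is correct and follows essentially the same route as the paper: both hinge on Lemma \ref{chainstep} to produce a single bilinear complement $\rho_i$ with $\psi_i=\rho_i\otimes\pi=\rho_i\otimes\pi'$, and then conclude by a chain of substitutions among the tensor presentations. The only difference is cosmetic: you carry out the substitutions in the Witt module $W_q(F)$ and recover the isometry from the anisotropic-or-hyperbolic dichotomy, whereas the paper runs the same chain directly at the level of isometries (tensoring isometric forms by a fixed bilinear Pfister form already preserves isometry, so the Witt-class detour is not needed).
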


\begin{proof}
Let us use colors to make it easier to follow the proof.
Suppose $\psi_1 = \varphi_1 \otimes \pi = \tau_1 \otimes \mu$ and $ \psi_2 = \varphi_2 \otimes \pi = \tau_2 \otimes \mu$ , where $\pi$ and $\mu$ are $k$-Pfister forms. We want to show that $\mathcal{F}^{2}_{k}(\psi_1, \psi_2) = \varphi_1 \otimes \varphi_2 \otimes \pi = \tau_1 \otimes \tau_2 \otimes \mu$.
Since $\pi$ and $\mu$ are sub-Pfister forms of $\psi_1$ and $\psi_2$, and thus by Lemma \ref{chainstep} there exist $\rho_1$ and $\rho_2$ such that \textcolor{red}{$\varphi_1 \otimes \pi = \psi_1 = \rho_1 \otimes \pi = \rho_1 \otimes \mu = \psi_1 = \tau_1 \otimes \mu$} and \textcolor{blue}{$\varphi_2 \otimes \pi = \psi_2 = \rho_2 \otimes \pi = \rho_2 \otimes \mu = \psi_2 = \tau_2 \otimes \mu$}.

And now: $\textcolor{red}{\varphi_1} \otimes \varphi_2 \otimes \textcolor{red}{\pi} = \textcolor{red}{\rho_1} \otimes \varphi_2 \otimes \textcolor{red}{\pi} = \rho_1 \otimes \textcolor{blue}{\varphi_2} \otimes \textcolor{blue}{\pi} = \rho_1 \otimes \textcolor{blue}{\tau_2} \otimes \textcolor{blue}{\mu} = \textcolor{red}{\rho_1} \otimes \tau_2 \otimes \textcolor{red}{\mu} = \textcolor{red}{\tau_1} \otimes \tau_2 \otimes \textcolor{red}{\mu}$, as desired.

\end{proof}

We can now define the invariant for finite sequences of forms inductively. Denote $PF^{m}(k) = \{(\psi_1 , \dots , \psi_m) \in \underbrace{PF \times \dots \times PF}_{m} : \text{all } \psi_{j} \text{ are $k$-linked} \}$. Now, let $$\mathcal{F}^{m}_{k} : PF^{m}(k) \to PF$$ defined by: $\mathcal{F}^{m}_{k}(\psi_1, \dots \psi_m ) = \mathcal{F}^{2}_{k}(\mathcal{F}^{m-1}_{k}(\psi_1, \dots, \psi_{m-1}), \psi_m )$. It is still invariant, since $\mathcal{F}^{m-1}_{k}(\psi_1, \dots, \psi_{m-1})$ is a Pfister form that is $k$-linked to $\psi_m$.
When the number $m$ is obvious from the context, we shall write simply $\mathcal{F}_{k}$.

\begin{rem}
In the special case where:
\begin{enumerate}
\item $\sqrt{-1} \in F$,
\item $\psi_1 , \dots, \psi_{t}$ are $n$-fold, and
\item they are separably ($n-1$)-linked,
\end{enumerate}
we have $\Sigma_{\psi_1 , \dots , \psi_{t}} = \mathcal{F}_{n-1}(\psi_1, \dots, \psi_{t})$,
where $\Sigma_{\psi_1,\dots,\psi_t}$ is the invariant defined in \cite{ChapmanGilatVishne:2017} for tight sets of quadratic Pfister forms. The following section thus generalizes the results from \cite[Section 5]{ChapmanGilatVishne:2017}.
\end{rem}

\section{From Separable to Inseparable Linkage}

In this section, $F$ is a field of characteristic 2.

\begin{thm}
Let $\psi_1 \in PF_{m+k}$ and $\psi_2 \in PF_{n+k}$ be $k$-linked Pfister forms (that is, separably $k$-linked).
\begin{enumerate}
\item If $\psi_1$ and $\psi_2$ are also inseparably $k$-linked, then their $k$-invariant is trivial, i.e., $\mathcal{F}_{k}(\psi_1 , \psi_2)$ is hyperbolic. 

\item If the $k$-invariant of $\psi_1$ and $\psi_2$ is trivial, and $m=1$ or $n=1$ (that is, at least one of them is a ($k+1$)-fold Pfister form), then the forms are inseparably $k$-linked 
\end{enumerate}
\end{thm}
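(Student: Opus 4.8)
The plan is to prove the two directions separately, using the invariant's structure and the two propositions from Section~2 as the main machinery. Let me set up notation: write $\psi_1 = \varphi_1 \otimes \pi$ and $\psi_2 = \varphi_2 \otimes \pi$ where $\pi$ is a $k$-fold quadratic Pfister form and $\varphi_1, \varphi_2$ are bilinear Pfister forms of foldness $m$ and $n$ respectively. Then by definition $\mathcal{F}_k(\psi_1,\psi_2) = \varphi_1 \otimes \varphi_2 \otimes \pi$, which is an $(m+n+k)$-fold quadratic Pfister form.

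\medskip

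\noindent\textbf{Part (1).} Suppose $\psi_1$ and $\psi_2$ are inseparably $k$-linked, so there is a $k$-fold \emph{bilinear} Pfister form $B$ and quadratic Pfister forms $\eta_1, \eta_2$ with $\psi_1 = B \otimes \eta_1$ and $\psi_2 = B \otimes \eta_2$. The first step is to extract from $B$ a common bilinear slot shared by both $\psi_1$ and $\psi_2$. The key observation in characteristic~$2$ is that a bilinear Pfister form $B$ is isotropic as soon as it is nonanisotropic, and $\langle\!\langle b \rangle\!\rangle$ over a quadratic Pfister form forces a hyperbolic contribution. Concretely, I would argue that when a common $k$-fold bilinear factor exists, the invariant $\varphi_1 \otimes \varphi_2 \otimes \pi$ contains a bilinear subform of the shape $\langle\!\langle a \rangle\!\rangle$ for which the corresponding quadratic Pfister form is metabolic; in characteristic~$2$ a quadratic Pfister form with a bilinear factor $\langle\!\langle a,a\rangle\!\rangle$ (equivalently, a repeated slot, since $\langle 1,1\rangle \cong \langle 1,0\rangle = \mathbb{H}$ as used in Proposition~\ref{inslinkedcond}) is hyperbolic. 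The cleanest route is to invoke Proposition~\ref{inslinkedcond}: inseparable $k$-linkage is equivalent to isotropy of $\omega = \theta_{\varphi} \perp \theta_{\psi} \perp \langle 1 \rangle$, and then to translate that isotropy into the vanishing (hyperbolicity) of $\mathcal{F}_k(\psi_1,\psi_2)$ by exhibiting the repeated bilinear slot explicitly.

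\medskip

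\noindent\textbf{Part (2).} For the converse under the hypothesis $m=1$ (say), we have $\psi_1 = \langle\!\langle b \rangle\!\rangle \otimes \pi$ a $(k+1)$-fold form. The hypothesis is that $\mathcal{F}_k(\psi_1,\psi_2) = \langle\!\langle b \rangle\!\rangle \otimes \varphi_2 \otimes \pi$ is hyperbolic, and I must conclude inseparable $k$-linkage. Here I would combine both propositions. First, since $\psi_1$ is $(k+1)$-fold and $\langle\!\langle b\rangle\!\rangle \otimes \pi$ is its own natural $(k+1)$-fold structure, the vanishing of the invariant should force a relation making $\langle\!\langle b \rangle\!\rangle$ (up to scaling) a \emph{bilinear} factor common to $\psi_2$ as well. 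The strategy is to show that hyperbolicity of $\langle\!\langle b \rangle\!\rangle \otimes \varphi_2 \otimes \pi$ means $\langle\!\langle b\rangle\!\rangle \otimes \varphi_2$ is metabolic over the function field of $\pi$, or more directly that $\psi_2$ acquires $\langle\!\langle b\rangle\!\rangle$ as a bilinear Pfister factor, which together with $\pi$ as a common quadratic factor upgrades the separable $k$-linkage to inseparable $k$-linkage via Proposition~\ref{inslinkedcond}.

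\medskip

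I expect the main obstacle to be Part~(2): translating the purely cohomological/form-theoretic statement ``$\mathcal{F}_k(\psi_1,\psi_2)$ is hyperbolic'' into the geometric statement ``there is a common bilinear factor,'' since hyperbolicity of the big product does not, a priori, localize to a single slot. The hypothesis $m=1$ is clearly what makes this tractable: it pins down the extra foldness of $\psi_1$ beyond $\pi$ to a single bilinear symbol $\langle\!\langle b\rangle\!\rangle$, so that the invariant's hyperbolicity can be read as a condition on that one symbol rather than an entangled product. I would try to reduce to Proposition~\ref{inslinkedcond} by identifying $\theta_{\varphi}$ and $\theta_{\psi}$ explicitly and checking that hyperbolicity of $\mathcal{F}_k$ yields isotropy of the associated $\omega$, likely using the Arason--Pfister Hauptsatz or a dimension count on the Witt index to control the anisotropic part. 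The separable-versus-inseparable asymmetry in characteristic~$2$ (where the converse genuinely can fail for $m,n \geq 2$) is precisely why the restriction to $(k+1)$-fold forms is needed, so the proof should make visible where a larger $m$ would break the argument.
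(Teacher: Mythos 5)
Your plan correctly identifies Proposition \ref{inslinkedcond} as the bridge between inseparable $k$-linkage and the invariant, but both halves are missing the one concrete step that makes that bridge usable: the form $\omega = \langle 1 \rangle \perp \theta_1 \perp \theta_2$ (where $\psi_i = \pi \perp \theta_i$) is a \emph{subform} of $\mathcal{F}_k(\psi_1,\psi_2) = \varphi_1 \otimes \varphi_2 \otimes \pi$. Indeed, writing $\varphi_2 = \langle 1 \rangle \perp \varphi_2'$ gives $\varphi_1 \otimes \varphi_2 \otimes \pi \cong (\pi \perp \theta_1) \perp (\varphi_1 \otimes \varphi_2' \otimes \pi) \supseteq \langle 1 \rangle \perp \theta_1 \perp \theta_2$. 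With this containment, Part (1) is immediate: inseparable $k$-linkage gives isotropy of $\omega$ by Proposition \ref{inslinkedcond}, hence isotropy of the Pfister form $\mathcal{F}_k(\psi_1,\psi_2)$, hence hyperbolicity. No ``repeated bilinear slot'' is needed, and the $\langle \! \langle a,a \rangle \! \rangle$ idea is a dead end, since inseparable $k$-linkage provides a common $k$-fold bilinear factor, not a repeated slot inside the product. (For the record, the paper proves Part (1) differently, by applying a result of Chapman--Gilat--Vishne to the $(k+1)$-fold subforms $\langle \! \langle a_1 \rangle \! \rangle \otimes \pi$ and $\langle \! \langle b_1 \rangle \! \rangle \otimes \pi$; your Proposition-\ref{inslinkedcond} route, once the containment is supplied, is a legitimate alternative.)

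For Part (2), the main body of your plan --- forcing $\langle \! \langle b \rangle \! \rangle$ to become a common bilinear factor of $\psi_2$, or passing to the function field of $\pi$ --- does not lead anywhere: even if you extracted a common $1$-fold bilinear factor, that would only give inseparable $1$-linkage rather than inseparable $k$-linkage, and there is no mechanism in sight for localizing the hyperbolicity of the full product to a single slot. The argument that actually works is the ``dimension count on the Witt index'' you mention only in passing at the end, made precise by the subform containment above: if $\mathcal{F}_k(\psi_1,\psi_2)$ is hyperbolic of dimension $2^{m+n+k}$, then every subform of dimension greater than $2^{m+n+k-1}$ is isotropic, and $\dim \omega = 2^{m+k}+2^{n+k}-2^{k+1}+1$ exceeds $2^{m+n+k-1}$ precisely when $m=1$ or $n=1$ (for $n=1$ it equals $2^{m+k}+1 > 2^{m+k}$); then Proposition \ref{inslinkedcond} finishes. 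This is also where the hypothesis $\min(m,n)=1$ genuinely enters --- not, as you suggest, because it pins the extra foldness of $\psi_1$ to a single symbol, but because for $m,n \geq 2$ the dimension inequality fails and the statement itself is false, as the paper's counterexample shows. So the proposal has a real gap in Part (2) and an unnecessary detour in Part (1); both are repaired by establishing $\omega \subseteq \mathcal{F}_k(\psi_1,\psi_2)$ and comparing $\dim \omega$ with the Witt index.
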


\begin{proof}
\
\begin{enumerate}

\item Let $\psi_1 = \langle \! \langle a_1, \dots ,a_m \rangle \! \rangle \otimes \pi$ and $\psi_2 = \langle \! \langle b_1, \dots ,b_n \rangle \! \rangle \otimes \pi$. Consider $\mathcal{F}_{k} (\langle \! \langle a_1 \rangle \! \rangle \otimes \pi , \langle \! \langle b_1 \rangle \! \rangle \otimes \pi) = \langle \! \langle a_1 \rangle \! \rangle \otimes \langle \! \langle b_1 \rangle \! \rangle \otimes \pi$. By [CGV, theorem 5.3], since $\langle \! \langle a_1 \rangle \! \rangle \otimes \pi , \langle \! \langle b_1 \rangle \! \rangle \otimes \pi$ are inseparably $k$-linked, $\langle \! \langle a_1 \rangle \! \rangle \otimes \langle \! \langle b_1 \rangle \! \rangle \otimes \pi$ is hyperbolic, but it is also a Pfister factor of $\mathcal{F}_{k}(\psi_1 , \psi_2)$, hence it is isotropic, hence hyperbolic, so it's trivial.

\item Again, let $\psi_1 = \langle \! \langle a_1 , \dots , a_m \rangle \! \rangle \otimes \pi$ and $\psi_2 = \langle \! \langle b_1 , \dots , b_n \rangle \! \rangle \otimes \pi$. Now, denote by $\theta_1$ , $\theta_2$ the unique form (up to isometry) such that: $$\psi_1 = \pi \perp \theta_1$$ $$ \psi_2 = \pi \perp \theta_2 $$
Note that $\dim(\theta_1) = 2^{m+k}-2^{k}$ and $\dim(\theta_2) = 2^{n+k}-2^{k}$. Define $\omega = \langle 1 \rangle \perp \theta_1 \perp \theta_2$. Note that $\dim(\omega) = 2^{m+k}+2^{n+k}-2^{k+1}+1$, and moreover, $\omega$ is a subform of $\mathcal{F}_{k}(\psi_1 , \psi_2)$. Now, without loss of generality, suppose $n=1$. Then $\dim(\omega) = 2^{m+k}+1 > 2^{m+k} = \frac{1}{2}2^{m+k+1} = i_{W}(\mathcal{F}_{k}(\psi_1 , \psi_2))$, where the last equality is true since $\dim(\mathcal{F}_{k}(\psi_1 , \psi_2)) = 2^{m+k+1}$ and it is hyperbolic by assumption. Thus, $\omega$ is isotropic, and hence by Proposition \ref{inslinkedcond}, $\psi_1$ and $\psi_2$ are inseparably $k$-linked, as desired.
\end{enumerate}
\end{proof}

\begin{rem}
Part (1) can be stated for a finite system of $k$-linked Pfister forms, following the exact same argument as in the proof.
\end{rem}

\begin{exmpl}[{Counter-example for (2) with $n,m \geq 2$}]
Let $\psi_1$ ($m+k$)-fold Pfister form and $\psi_2$  ($n+k$)-fold Pfister form such that they are separably $k$-linked, but not inseparably $k$-linked. Then by Proposition \ref{inslinkedcond}, $\omega$ is anisotropic. Now, define $\delta = \mathcal{F}_{k}(\psi_1, \psi_2)$. Note that $\dim(\omega) = 2^{m+k}+2^{n+k}-2^{k+1}+1$, and $\dim(\delta) = 2^{n+m+k}$. Now, notice that when $m,n \geq 2$, $2^{m+k}+2^{n+k}-2^{k+1}+1 < 2^{n+m+k-1}+1$, hence $2\dim(\omega)-2 < \dim(\delta)$, which means that $2\dim(\omega)-1 \leq \dim(\delta)$, and so by the separation theorem \cite[Theorem 26.5]{EKM}, $\omega_{F(\delta)}$ is still anisotropic, hence $\psi_1$ and $\psi_2$ over $F(\delta)$ are not inseparably $k$-linked (Proposition \ref{inslinkedcond}), but their invariant over $F(\delta)$ is trivial.  
\end{exmpl}

\begin{exmpl}[{Counter-example for (2) with more than 2 forms, with at least one of them being ($k+1$)-fold}]
Let $F = \mathbb{F}_{2}^{sep}(x,y)$. Consider the following three forms: $\langle \! \langle \beta, \alpha ] \! ]$ , $\langle \! \langle \alpha, \beta ] \! ]$ , $\langle \! \langle \beta, \alpha \beta ] \! ]$. They are all separably 1-linked, but not inseparably 1-linked (see \cite[Section 5]{Chapman:2018}). Their invariant is trivial because they are pair-wise inseparably 1-linked.
In order to see why they are separably 1-linked, one can repeat the argument in \cite[Example 2.4]{Chapman:2021}, utilizing the fact that this field is $C_2$ to show that they are either separably 1-linked or inseparably 1-linked, and then either explain that since they are not inseparably 1-linked, they must be separably 1-linked, or use \cite{ChapmanFlorenceMcKinnie:2022} to recall that inseparably linkage implies separable linkage anyway.

\end{exmpl}

\section{From $k$- to $(k+1)$-linkage}

\begin{thm}\label{ktkpo}
Let $F$ be a field of any characteristic, with $-1 \in (F^{\times})^{2}$.
Let $\psi_1$ and $\psi_2$ be ($k+1$)-linked Pfister forms. Then their $k$-invariant is trivial, i.e., $\mathcal{F}_{k}(\psi_1 , \psi_2)$ is hyperbolic.
\end{thm}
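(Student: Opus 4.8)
The plan is to exploit the well-definedness of $\mathcal{F}_k$ (already established) by computing the invariant from a symbol presentation adapted to the $(k+1)$-linkage, rather than from an arbitrary one. Since $\psi_1$ and $\psi_2$ are (separably) $(k+1)$-linked, by definition there is a $(k+1)$-fold quadratic Pfister form $\sigma$ and bilinear Pfister forms $\varphi_1,\varphi_2$ with $\psi_1 = \varphi_1 \otimes \sigma$ and $\psi_2 = \varphi_2 \otimes \sigma$. Peeling off one slot of $\sigma$, I would write $\sigma = \langle \! \langle \gamma \rangle \! \rangle \otimes \pi$, where $\pi$ is a $k$-fold quadratic Pfister form and $\langle \! \langle \gamma \rangle \! \rangle$ is a $1$-fold bilinear Pfister form (this factorization exists in every characteristic). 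Then $\psi_1 = (\varphi_1 \otimes \langle \! \langle \gamma \rangle \! \rangle) \otimes \pi$ and $\psi_2 = (\varphi_2 \otimes \langle \! \langle \gamma \rangle \! \rangle) \otimes \pi$ both exhibit the common $k$-fold factor $\pi$, so the well-definedness theorem licenses me to evaluate the invariant through this presentation:
$$\mathcal{F}_k(\psi_1,\psi_2) = (\varphi_1 \otimes \langle \! \langle \gamma \rangle \! \rangle) \otimes (\varphi_2 \otimes \langle \! \langle \gamma \rangle \! \rangle) \otimes \pi = \varphi_1 \otimes \varphi_2 \otimes \langle \! \langle \gamma , \gamma \rangle \! \rangle \otimes \pi.$$

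The heart of the matter is then to observe that the repeated slot $\langle \! \langle \gamma , \gamma \rangle \! \rangle$ is already defective once $-1$ is a square. Using the standard relation $\langle \! \langle \gamma , \gamma \rangle \! \rangle \cong \langle \! \langle \gamma , -1 \rangle \! \rangle$ in the bilinear Witt ring, together with the fact that $\langle \! \langle -1 \rangle \! \rangle$ is isotropic whenever $-1 \in (F^\times)^2$ (and automatically so in characteristic $2$), the bilinear Pfister form $\langle \! \langle \gamma , \gamma \rangle \! \rangle$ is metabolic. Consequently the bilinear factor $\varphi_1 \otimes \varphi_2 \otimes \langle \! \langle \gamma , \gamma \rangle \! \rangle$ appearing above is metabolic as well.

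To conclude, I would invoke that the tensor product of a metabolic bilinear form with a quadratic form is hyperbolic; equivalently, that $\mathcal{F}_k(\psi_1,\psi_2)$ is an isotropic quadratic Pfister form, hence hyperbolic by the dichotomy that a quadratic Pfister form is either anisotropic or hyperbolic \cite{EKM}. Either route gives that $\mathcal{F}_k(\psi_1,\psi_2)$ is hyperbolic, as claimed.

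As for difficulty, the argument is short precisely because the substance has been front-loaded into the well-definedness theorem, which lets me replace the given common $k$-fold factor by one manufactured from the $(k+1)$-fold factor. The one place demanding care — and the main potential obstacle — is the uniform treatment of both characteristics in the identity $\langle \! \langle \gamma , \gamma \rangle \! \rangle \cong \langle \! \langle \gamma , -1 \rangle \! \rangle$ and in the step from a metabolic bilinear factor to a hyperbolic quadratic Pfister form; for both I would rely on the structural results on bilinear and quadratic Pfister forms in \cite{EKM}.
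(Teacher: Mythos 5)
Your argument is correct and is essentially the paper's own proof: both exploit the well-definedness of $\mathcal{F}_k$ to compute the invariant from a presentation built out of the common $(k+1)$-fold factor, so that the invariant acquires a repeated bilinear slot, and both then use $-1 \in (F^{\times})^{2}$ to see that the resulting $2$-fold bilinear factor ($\langle \! \langle \gamma, \gamma \rangle \! \rangle \cong \langle \! \langle \gamma, -1 \rangle \! \rangle$ in your version, $\langle \! \langle a, -a \rangle \! \rangle$ in the paper's) is metabolic, forcing the quadratic Pfister form to be isotropic and hence hyperbolic. No gaps.
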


\begin{proof}
Let $\psi_1 = \langle \! \langle a , b_1 , \dots , b_m \rangle \! \rangle \otimes \pi$ and $\psi_2 = \langle \! \langle a , c_1 , \dots , c_n \rangle \! \rangle \otimes \pi$, where $\pi$ is a $k$-fold Pfister form. Since $-1 \in (F^{\times})^{2}$, $\langle \! \langle a \rangle \! \rangle = \langle \! \langle -a \rangle \! \rangle$, we get that $\mathcal{F}_{k}(\psi_1 , \psi_2 ) = \langle \! \langle a , b_1 , \dots , b_m \rangle \! \rangle \otimes \langle \! \langle -a , c_1 , \dots , c_n \rangle \! \rangle \otimes \pi$. This is clearly an isotropic Pfister form, and thus it is hyperbolic.
\end{proof}
\begin{rem}
For any $\psi$ that is $k$-linked to $\langle \! \langle -1 \rangle \! \rangle ^{m}$ for some positive integer $m$ bigger than $k$, the invariant $\mathcal{F}_{k}(\psi,\langle \! \langle -1 \rangle \! \rangle ^{m})$ is trivial.
\end{rem}

\begin{exmpl}[{Counter-example for Theorem \ref{ktkpo} without assuming $-1 \in (F^{\times})^{2}$}]
A trivial example for that would be $F = \mathbb{R}$ and take the $n$-fold Pfister forms $\psi_1 = \psi_2 = \langle \! \langle -1, \dots, -1 \rangle \! \rangle$. For each $k \in \{1,\dots,n-1\}$, their $k$-invariant is $\mathcal{F}_{k} (\psi_1, \psi_2) = \langle \! \langle \underbrace{-1, \dots, -1}_{2n-k \ \text{times}} \rangle \! \rangle$. This invariant is nontrivial, despite the fact that the forms are also $(k+1)$-linked.
\end{exmpl}

\begin{exmpl}[{Counter-example for the converse of Theorem \ref{ktkpo}}]
Let $\psi_1$ $m$-fold Pfister form and $\psi_2$ $n$-fold Pfister form over a field $F$, $k$-linked but not ($k+1$)-linked. Write $\psi_1 = \varphi_1 \otimes \pi = \mu_1 \perp \pi$ and $\psi_2 = \varphi_2 \otimes \pi = \mu_2 \perp \pi$. By Proposition \ref{seplinkedcond}, $\omega = \mu_1 \perp \mu_2$ is anisotropic. Denote $\delta = \mathcal{F}_{k}(\psi_1, \psi_2)$. Extending scalars to $F(\delta)$, we get that $\delta$ is now hyperbolic, so the invariant is trivial. But, $\dim(\omega) = 2^{n}+2^{m}-2^{k+1} \leq 2^{n+m-k-1} < 2^{n+m-k} = \dim(\delta)$, and by the separation theorem \cite[Theorem 26.5]{EKM}, $\omega$ is anisotropic, and by Proposition \ref{seplinkedcond}, $\psi_1$ and $\psi_2$ are not ($k+1$)-linked (over $F(\delta)$), although their invariant is trivial.
\end{exmpl}

\section*{Acknowledgements}

The authors wish to thank the referee for the careful reading of the manuscript and the helpful comments.

\bibliographystyle{amsplain}
\def\cprime{$'$}
\providecommand{\bysame}{\leavevmode\hbox to3em{\hrulefill}\thinspace}
\providecommand{\MR}{\relax\ifhmode\unskip\space\fi MR }
\providecommand{\MRhref}[2]{%
  \href{http://www.ams.org/mathscinet-getitem?mr=#1}{#2}
}
\providecommand{\href}[2]{#2}

\end{document}